%\PassOptionsToPackage{unicode}{hyperref}
%\PassOptionsToPackage{naturalnames}{hyperref}
\documentclass[11pt,twoside, a4paper, english, reqno]{amsart}
\usepackage[dvips]{epsfig}
\usepackage{amscd}
\usepackage{amssymb}
\usepackage{amsthm}
\usepackage{amsmath}
\usepackage{latexsym}
\usepackage{amsaddr}

\usepackage{upref}
\usepackage{hyperref}
\usepackage{bookmark}

\hypersetup{linkcolor=blue, colorlinks=true
,citecolor = red}
\usepackage{color}
\setlength{\topmargin}{-.5cm}
\setlength{\textheight}{23cm}
\setlength{\evensidemargin}{0.0cm}
\setlength{\oddsidemargin}{01.2cm}

\setlength{\textwidth}{15.1cm}

\newtheorem{thm}{Theorem}[section]

\newtheorem{lem}[thm]{Lemma}

\newtheorem{prop}[thm]{Proposition}

\newtheorem{rem}[thm]{Remark}

\newcommand{\Lq}{L^{q}(\Omega)}

\newcommand{\Li}{L^{\infty}(\Omega)}
\newcommand{\Om}{\Omega}
\newcommand{\RO}{R_{\Omega}}
\newcommand{\rn}{\mathbb{R}^{n}}

\newcommand{\CO}{C^{\infty}_c(\Omega\setminus\{0\})}
\newcommand{\Wnz}{W^{1,n}_{0}(\Omega)}

\newcommand{\Inb}{I_{n}[u,B_{1},R]}

\newcommand{\Inu}{I_{n}[u,\Om,R]}
\newcommand{\Jnv}{J_{n}[v,\Om,R]}

\newcommand{\Rt}{\tilde{R}}
\newcommand{\Br}{B(x,\tilde{R})}

\newcommand{\RNum}[1]{\lowercase\expandafter{\romannumeral #1\relax}}
\newcommand{\authorfootnotes}{\renewcommand\thefootnote{\@fnsymbol\c@footnote}}%

\numberwithin{equation}{section} \allowdisplaybreaks

\begin{document}

\title{An Improved Leray-Trudinger Inequality}
\author{Arka Mallick $^\dagger$ and Cyril Tintarev$^\diamond$}
\thanks{$^\dagger$ TIFR  Centre for Applicable Mathematics, Post Bag No. 6503
 Sharadanagar,\\Yelahanka New Town, Bangalore 560065.\\ Email:arka@math.tifrbng.res.in\\
 $^\diamond$Email:tintarev@math.uu.se}
\begin{abstract}
 In this article, we have derived the following Leray-Trudinger type inequality on a bounded domain $\Om$ in $\rn $ containing the origin.
\begin{align*}
\displaystyle{\sup_{u\in W^{1,n}_{0}(\Om), \Inu\leq 1}}\int_{\Omega} e^{c_n\left(\frac{|u(x)|}{E_{2}^{\beta}(\frac{|x|}{R})}\right)^{\frac{n}{n-1}}} dx < +\infty \ \text{, for some } c_n>0 \ \text{depending only on } n.  
\end{align*}
Here $\beta = \frac{2}{n}$, 
$I_n[u,\Om,R] := \int_{\Om}|\nabla u |^{n}dx- \left(\frac{n-1}{n}\right)^{n}\int_{\Om}\frac{|u|^{n}}{|x|^{n}E_{1}^n(\frac{|x|}{R})}dx $, $R \geq \displaystyle{\sup_{x\in \Om}}|x|$ and $E_{1}(t) := \log(\frac{e}{t})$, $E_{2}(t) := \log(eE_1(t))$ for $t\in (0,1].$ This improves an earlier result by Psaradakis and  Spector. Also we have proved that for any $c>0$ in the place of $c_n$ the above inequality is false if we take $\beta < \frac{1}{n}.$ 
\end{abstract}
\maketitle
MSC2010 Classification: \em{46E35, 26D15,35J92}\\
Keywords: \em{Hardy inequality, Leray potential, Borderline Sobolev embedding}
\section{Introduction}
In this article we intend to discuss some Leray-Trudinger type inequalities. These type of inequalities are closely related to different types of Trudinger-Moser inequalities and Hardy inequalities. So let us quickly recall some relevant results about these inequalities. \\
\par Let $\Om$ be any bounded domain in $\rn.$ Then the celebrated Sobolev embedding theorem asserts that for $p<n$
\begin{align*}
W^{1,p}_{0}(\Om) \subset L^{q}(\Om) \ \ \text{for any } q \ \text{satisfying } 1\leq q \leq p^{*} = \frac{np}{n-p}.
\end{align*}
This immediately leaves us with the following question. What happens when $p=n?$ Note that here $p^{*}$ is formally $+\infty.$ As expected, in this case one can prove that $W^{1,n}_{0}(\Om)\subset L^{q}(\Om)$ for any $q$ satisfying, $1\leq q<+\infty.$ However, for $q= +\infty$, easy example shows that one can not get the above embedding. This raises the the following interesting question. What is the maximal growth function $f(t)$ for which the following is true:
\begin{align}\label{ILTII1}
 u\in W^{1,n}_{0}(\Om) \ \text{implies } \ \int_{\Om}f(u)dx <\infty?
\end{align}
By Sobolev embedding, \eqref{ILTII1} is true when $f$ is a polynomial, but in fact it is allowed to have exponential growth, namely there exists some constant $c(n)>0$ depending only on $n$ such that
\begin{align}\label{ILTII2}
\displaystyle{\sup_{u\in W^{1,n}_{0}(\Om),\left|\left|\nabla u\right|\right|_{L^{n}(\Om)}\leq 1}} \int_{\Om} e^{c(n)\left|u\right|^{\frac{n}{n-1}}}dx <\infty.
\end{align}
This result is most often attributed to Trudinger \cite{T}, although it was obtained earlier by Yudovich \cite{Yud}, Peetre \cite{Pet} and Pohozaev \cite{Poh}. The proof in this paper still follows the method of \cite{T} based on getting some expedient upper bound on the $L^{q}$ norm of $u$ that yields \eqref{ILTII2}.
Later Hempel, Morris and Trudinger (see \cite{HMT}) together proved that the function $f(t)= e^{t^{\frac{n}{n-1}}}$ is in fact the function with maximal growth. In 1971, Moser proved a sharp version of \eqref{ILTII2} in \cite{M}. In fact, he proved that for any $c$ satisfying, $0<c\leq \alpha(n)$,  \eqref{ILTII2} holds true but fails for any $c > \alpha(n),$ where $\alpha(n) = nw_{n-1}^{\frac{1}{n-1}}$ and $w_{n-1}$ is as usual surface area  of the unit ball in $\rn.$ This lead us to a very rich literature. See \cite{AD},\cite{AS1},\cite{AY},\cite{BM1},\cite{C},\cite{LR},\cite{P},\cite{R},\cite{T1} and references therein for more information.\\
\par Now let us discuss some results concerning Hardy inequalities. Let $B_{n}$ be the unit ball in $\rn$. Then the Hardy inequality says 
\begin{align}\label{BHI1}
H_n(u): = \int_{B_n} |\nabla u|^2 dx - \int_{B_n} \frac{|u|^2}{(1-|x|^{2})^2} dx \geq 0 \ ,\ \forall u \in H^1_0(B_n). 
\end{align}
When $n\geq 2$ one can prove that (see \cite{BM}) there exists a positive constant $C$, depending only on $n$ such that  
\begin{align}\label{BHI2}
H_n(u) \geq C\int_{B_n} u^2 dx ,\ \ \forall u \in H^1_0(B_n). 
\end{align}
Later Maz'ya proved in \cite{MV} that for $n>2$ there exists a constant $C_{n}>0$, depending only on $n$ such that for any $u \in H^1_0(B_n),$
\begin{align}\label{BHI3}
H_n(u) \geq C_{n} \left(\int_{B_n} |u(x)|^{\frac{2n}{n-2}}\right)^{\frac{n-2}{n}}.
\end{align}
This inequality is called Hardy-Sobolev-Maz'ya inequality. This inequality raises the following question. If $n=2$ wheather one can derive the Trudinger-Moser type of inequality or not. In \cite{GD} Wang and Ye proved that indeed in this case, one can derive such an inequality. Their result is the following.
\par \textit{
There exists an constant $C_0> 0$ such that
\begin{align}\label{BHI4}
\int_{B_2} e^{\frac{4\pi u^2}{H_2(u)}} dx \leq C_0 < \infty, \ \ \forall u \in H^1_0(B_2)\setminus \{0\}.
\end{align}
}
One may wonder if this kind of Moser-Tridunger inequality holds for bounded convex domain with smooth boundary. For that let us recall the corresponding version of boundary Hardy inequality. Let $\Om $ be a bounded, convex domain in $\mathbb{R}^2$ with smooth boundary, then (see \cite{BM}) 
\begin{align}\label{BHI5}
H_d(u) : = \int_{\Om} |\nabla u|^2 dx - \frac{1}{4}\int_{\Om} \frac{u^2}{d(x,\partial \Om)^2} dx>0 , \ \ \forall u \in H^1_0(\Om)\setminus \{0\}.
\end{align}
One can easily check that for $\Om = B_2$, \eqref{BHI4} is true if we replace $H_2(u)$ by $H_d(u).$  In fact, in a very recent paper  by Lu and Yang (\cite{LY}), it has been established that the above type of inequality is true for any bounded and convex domain. 
In this context let us mention the following improved version of Moser-Trudinger inequality on unit disk on $\mathbb{R}^2$ which was proved in \cite{AT,MS}.
\begin{align}\label{BHI6}
\displaystyle{\sup_{u\in H^1_0(B_2),||\nabla u||_{L^2(B_2)}\leq 1}}\int_{B_2} \frac{e^{4\pi u^2}-1}{(1-|x|^2)^2} dx < \infty .
\end{align}  
 \par Now one may ask the following question. What will be the scenario if we replace the distant function from boundary by simply the distant function from the origin in \eqref{BHI5}? To give an answer of this question, let us recall the corresponding Hardy inequality and the literature associated with it.  
 \par Let $\Om$ be a domain domain in $\rn$ containing the origin. Then the classical Hardy's inequality asserts that for $n\geq 3,$ 
\begin{align}\label{ILTII3}
I_{\Om}[u]:= \int_{\Om}|\nabla u|^{2}dx - \left(\frac{n-2}{2}\right)^{2}\int_{\Om}\frac{|u|^{2}}{|x|^{2}}dx \geq 0 \ \ , \forall u \in H^{1}_{0}(\Om).
\end{align}
Here $\left(\frac{n-2}{2}\right)^{2}$ is the best constant and never achieved. Hence there is a scope of improvement of \eqref{ILTII3}. The first improvement was done by Brezis and Vazquez (see \cite{BV}). Their result is the following.
\par \textit{If Lebesgue measure of $\Om$ is finite then, for any $1\leq q < 2^{*}$ there exists a positive constant $C$ depending only on $n$ , $q$ and $\Om$, such that
\begin{align}\label{ILTII4}
\left(I_{\Om}[u]\right)^{\frac{1}{2}} \geq C\left(\int_{\Om}|u|^{q}\right)^{\frac{1}{q}}\ \ , \forall u\in H^{1}_{0}(\Om).
\end{align}
Here as usual $2^{*}= \frac{2n}{n-2}$ is the critical Sobolev constant.} \\ \par In the past few years a lot of efforts have been made to improve  \eqref{ILTII4} and generalize it to $L^{p}$ Hardy-Sobolev inequality.  We refer \cite{ACR}, \cite{ASk}, \cite{AS}, \cite{FT}, \cite{VZ} and references therein for further details. Observing the relevance of \eqref{ILTII4} with Sobolev's inequality one may wonder what would be the case when $n=2$ or in analogy with $L^{p}$ Hardy-Sobolev inequality what would be the case when $p=n.$ For that let us recall Hardy's inequality for $p=n$ (see \cite{ACR},\cite{AS}, \cite{BFT1} and \cite{BFT}).\\
\par \textit{Let $\Om$ be a bounded domain in $\rn$ ($n\geq 2$) containing the origin and $\RO:=\displaystyle{\sup_{x\in\Om}|x|}$. Then for any $u \in W^{1,n}_{0}(\Om)$ and $R\geq\RO$
\begin{align}
I_{n}[u,\Om,R]:= \int_{\Om}|\nabla u |^{n}dx- \left(\frac{n-1}{n}\right)^{n}\int_{\Om}\frac{|u|^{n}}{|x|^{n}E_{1}^n(\frac{|x|}{R})}dx \geq 0,
\end{align}\label{ILTII5}
where $E_{1}(t):= \log(\frac{e}{t})$ for $t\in(0,1]$ and $\left(\frac{n-1}{n}\right)^{n}$ is the best possible constant which is never achieved. 
}\\
Inequality of type \eqref{BHI4} is generally expected to hold when the functional $H_2$ is replaced by a similar nonnegative functional with a general potential,
\begin{equation}
\label{q2}
H_V(u)=\int_{\Omega}|\nabla u|^2 dx-\int_{\Omega}V(x)|u|^2 dx
\end{equation}
as long as $H_V$ does not possess a null sequence, that is, a sequence $u_k\in C^\infty_c(\Omega)$, such that $H_V(u_k)\to 0$, while for some bounded open set $B$, $\int_Bu_k dx=1$ (such sequence is known to converge to a positive solution, often called generalized ground state). If a null-sequence exists, the corresponding analog of \eqref{BHI4} is immediately false. On the other hand, it is shown in \cite{T1} for $\Omega=B_2$ and for radial potential $V$, that if $H_V$ does not admit a null-sequence, then the analog of \eqref{BHI4} remains true, as long as conditions of the ground state alternative ( \cite{PinTin}) are satisfied. In the radial case this amounts to the condition (2.3) of \cite{T1} (we bring attention to the reader that we correct here a misprint in the published version):
\begin{equation}
\label{t1c}
\exists \alpha>0:\;\lim_{r\to 0}r^2(\log\frac{1}{r})^{2+\alpha}V(r)\to 0.
\end{equation}
In the case when $H_V$ has the form $I_2$, potential $V$ corresponds to the borderline case $\alpha=0$, does not satisfy condition \eqref{t1c}, and, remarkably, as it was first observed by Psaradakis and Spector in \cite{PS}, the analog of \eqref{BHI4} is false, and, more generally, 
there does not exist any positive constant $c$ depending only on $n$ for which the following is true:
\begin{align*}
\displaystyle{\sup_{u\in W^{1,n}_{0}(\Om),I_n[u,\Om,\RO] \leq 1}}\int_{\Om} e^{c|u|^{\frac{n}{n-1}}}dx < \infty  .
\end{align*}
However, introducing a logarithmic factor, in the same paper they proved the following Hardy-Trudinger type inequality. 
\par \textit{
Let $\Om$ be a bounded domain in $\rn$; $n\geq 2,$ containing the origin. For any $\epsilon> 0$ there exist positive constants $A_{n,\epsilon}$ depending only on $n$, $\epsilon$ and $B_{n}$ depending only on $n$ such that 
\begin{align}\label{ILTII6}
\int_{\Om} e^{A_{n,\epsilon}\left(\frac{|u|}{E_{1}^{\epsilon}\left(\frac{|x|}{\RO}\right)}\right)^{\frac{n}{n-1}}}dx \leq B_{n}vol(\Om) \ \ \text{for all } u\in W^{1,n}_{0}(\Om) \ \text{satisfying } I_{n}[u,\Om,\RO] \leq 1.
\end{align}
}\\
\par Let $B$ denote the unit ball in $\rn$ centered at the origin and consider the following space of radial functions
\begin{align*}
W:= \{u\in C_{0,rad}^{1}(B):u \ \text{is radially symmetric and } I_n[u,B,1]<\infty\} .
\end{align*}
Then one can easily derive using Lemma 5 of \cite{CR} that there exists an positive constant $C_{n}$ depending only on $n$ such that
\begin{align}\label{MILTITP}
\displaystyle{\sup_{u\in W,I_{n}[u,B,1]\leq1}} \int_{B} e^{C_{n}\frac{|u|^{\frac{n}{n-1}}}{E_{2}(|x|)}}dx <\infty,
\end{align}
where $E_{2}(t):= \log(eE_{1}(t))$ for $t \in (0,1].$
\par This simple observation motivated us to investigate wheather \eqref{ILTII6} is true if we replace $E_{1}$ by $E_{2}$ in the power of exponential and we got  the following result.
\begin{thm}\label{MILTIT}
Let $\ 0\in \Omega$ be a bounded domain in $\rn,$ $\ (n \geq 2)$. Then for any $\ \beta \geq 2/n,$ and $R\geq \RO,$ 
there exists constants $\ A_{n}$  and $\ B_{n}$  depending only on $n$  such that, for any  $\ 0<c<A_{n}$
\begin{align}\label{MILTIT1}
&\int_{\Omega} e^{c\left(\frac{|u(x)|}{E_{2}^{\beta}(\frac{|x|}{R})}\right)^{\frac{n}{n-1}}} dx \leq B_{n} vol(\Omega),\ 
\forall u \in W^{1,n}_{0}(\Omega) \ \text{satisfying,}\ \Inu \leq 1.
\end{align}
In other words,
\begin{align}\label{MILTIT2}
 \displaystyle{\sup_{u\in W^{1,n}_{0}(\Om), \Inu\leq 1}}\int_{\Omega} e^{c\left(\frac{|u(x)|}{E_{2}^{\beta}(\frac{|x|}{R})}\right)^{\frac{n}{n-1}}} dx < +\infty \ \text{, for } 0<c<A_{n}.
\end{align}
Moreover, the above supremum is $+ \infty$ if $\beta<\frac{1}{n}$ for any $c>0.$
\end{thm}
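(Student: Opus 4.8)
The plan is to reduce the positive part (the supremum is finite for $0<c<A_n$) to the Psaradakis–Spector inequality \eqref{ILTII6} by a change of unknown, and to handle the negative part (the supremum is $+\infty$ when $\beta<1/n$) by exhibiting an explicit one-parameter family of test functions, radial around the origin, for which $\Inu$ stays bounded while the exponential integral blows up.

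For the positive statement, the key observation is that $E_2^\beta$ with $\beta\ge 2/n$ dominates $E_1^\epsilon$ only for $\epsilon$ effectively zero, so a direct comparison of \eqref{MILTIT1} with \eqref{ILTII6} does not work; instead I would introduce the substitution $v = u\,\Phi(|x|/R)$ for a suitable radial weight $\Phi$ chosen so that (i) the Hardy-type functional transforms controllably, i.e. $I_n[u,\Omega,R]$ bounds from below a nonnegative functional in $v$ of the same type possibly with an extra logarithmic gain in the potential, and (ii) the ratio $|u|/E_2^\beta$ becomes comparable to $|v|/E_1^\epsilon$ for some small $\epsilon>0$. The natural candidate is $\Phi(t)=E_1(t)^{\beta-\epsilon}=(\log\frac{e}{t})^{\beta-\epsilon}$, in the spirit of the ground-state substitution used to derive \eqref{ILTII5}. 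One then expands $|\nabla v|^n$, uses the elementary inequality $|a+b|^n \ge |a|^n - n|a|^{n-1}|b| $ type bound (or more efficiently the convexity inequality behind the Hardy chain), integrates by parts the cross term, and checks that the boundary/logarithmic remainder is absorbed because $\beta\ge 2/n$ makes the correction term a convergent Leray-type potential with exponent strictly larger than the critical one — precisely here the value $\beta=2/n$ enters, matching $E_2^{2/n}$ against the "second logarithm" improvement. After this reduction, \eqref{ILTII6} applied to $v$ with a small $\epsilon$ yields \eqref{MILTIT1} for $c$ below some $A_n$, and \eqref{MILTIT2} is an immediate restatement.

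For the sharpness statement ($\beta<1/n$), I would construct test functions concentrated near the origin of the form $u_\delta(x) = \psi(E_1(|x|/R))$ with $\psi$ a profile mimicking the extremals of the Leray–Hardy inequality \eqref{ILTII5} truncated at scale depending on $\delta$: roughly $u_\delta$ behaves like $E_1(|x|/R)^{(n-1)/n}$ in an annulus and is cut off smoothly near $\partial\Omega$. One computes, in polar coordinates, that $\int_\Omega |\nabla u_\delta|^n\,dx$ and $(\frac{n-1}{n})^n\int_\Omega \frac{|u_\delta|^n}{|x|^n E_1^n}\,dx$ each diverge logarithmically (in the second logarithm) but their difference $\Inu$ stays bounded — this is the standard mechanism showing $(\frac{n-1}{n})^n$ is not attained. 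Meanwhile $|u_\delta(x)|/E_2^\beta(|x|/R) \sim E_1^{(n-1)/n}/E_2^\beta$, so raised to the power $n/(n-1)$ the exponent is $E_1 \cdot E_2^{-\beta n/(n-1)}$; since $\beta<1/n$ gives $\beta n/(n-1)<1/(n-1)$, and $E_2 = \log(eE_1)$ grows like $\log E_1$, the factor $E_2^{-\beta n/(n-1)}$ is a negative power of $\log E_1$ which is not summable against $E_1$, so $\int_\Omega e^{c(\cdot)}\,dx$ diverges for every $c>0$. I would make this rigorous by normalizing $u_\delta$ to have $\Inu\le 1$ and letting the concentration scale tend to the origin.

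The main obstacle I anticipate is the positive direction: getting the substitution $v=u\,E_1^{\beta-\epsilon}$ to produce a \emph{nonnegative} functional $I_n[v,\Omega,R]\le C\,I_n[u,\Omega,R]$ (or a variant with a harmless remainder) requires a careful integration-by-parts argument with the $n$-Laplacian-type nonlinearity, and one must verify that the logarithmic correction terms generated have the right sign or are dominated — this is exactly the delicate point where $\beta=2/n$ is forced rather than $\beta=1/n$, and where the improvement over Psaradakis–Spector is actually won. The borderline bookkeeping of the second logarithm $E_2$ against $E_1$ in those remainder integrals is the technical heart of the argument.
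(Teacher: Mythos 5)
Your proposal diverges from the paper in both directions, and in both directions there is a genuine gap. For the positive part, the reduction to the Psaradakis--Spector inequality \eqref{ILTII6} via $v=u\,E_1^{\beta-\epsilon}$ cannot work. First, requirement (ii) fails for this weight: $|v|/E_1^{\epsilon}=|u|\,E_1^{\beta-2\epsilon}$, and no power of $E_1$ is comparable to a power of $E_2=\log(eE_1)$, so this is never comparable to $|u|/E_2^{\beta}$; the only weight achieving (ii) is $\Phi=E_1^{\epsilon}E_2^{-\beta}$, which is unbounded at the origin. Second, for any such unbounded $\Phi$ the bound $I_n[u\Phi,\Om,R]\le C\,\Inu$ is false: the standard near-extremal profiles $u\sim E_1^{(n-1)/n}$ (truncated) keep $\Inu$ bounded, while $u\Phi\sim E_1^{(n-1)/n+\epsilon}E_2^{-\beta}$ has $I_n[u\Phi]=+\infty$, since for a radial profile $E_1^{\gamma}$ with $\gamma>\frac{n-1}{n}$ both terms of $I_n$ diverge and the gradient coefficient $\gamma^n$ strictly exceeds $\left(\frac{n-1}{n}\right)^n$. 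More fundamentally, \eqref{MILTIT1} is strictly stronger than \eqref{ILTII6} (it implies it for every $\epsilon>0$ with a uniform constant), so no soft change of variables can derive it from \eqref{ILTII6}. The quantitative input your sketch is missing is the Barbatis--Filippas--Tertikas improved $L^n$ Hardy inequality (Theorem \ref{SE CrHI1}), whose sharp remainder $\frac12\left(\frac{n-1}{n}\right)^{n-1}\int_{\Om}|u|^n|x|^{-n}E_1^{-n}E_2^{-2}\,dx$ is exactly what permits the weight $E_2^{2/n}$. The paper runs Trudinger's $L^q$-norm scheme directly on $u/E_2^{2/n}$: the representation formula of Lemma \ref{Grep1}, H\"older with $q$-dependent exponents, Proposition \ref{CHTI1} ($J_n[v,\Om,R]\le C_1(n)\Inu$ with $v=E_1^{-(n-1)/n}u$) for the gradient piece, Theorem \ref{SE CrHI1} for the potential piece, and then summation of the exponential series.

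For the negative part your computation is incorrect because it ignores the volume element. Setting $t=E_1(|x|/R)$ one has $dx\approx Ce^{-nt}\,dt$ near the origin, while for $u_\delta\sim E_1^{(n-1)/n}$ the exponent is $c\,t\,(\log(et))^{-\beta n/(n-1)}=o(t)$ for \emph{every} $\beta>0$; hence $\int_{\Om}e^{c(|u_\delta|/E_2^{\beta})^{n/(n-1)}}dx\le C\int e^{o(t)-nt}\,dt<\infty$ for every $c>0$, and your test functions do not make the integral blow up. (This is consistent with the fact that even the Psaradakis--Spector negative result is not obtained from a single divergent test function.) A genuine direct construction would require a Moser-type concentration superimposed on the ground state, and then keeping $\Inu$ bounded is the hard step for $n>2$, because the cross terms in the expansion of $|\nabla(E_1^{(n-1)/n}w)|^n$ are not controlled by $J_n[w,\Om,R]$ alone; your heuristic, if pushed through, would in any case not produce the threshold $1/n$. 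The paper argues indirectly: assuming \eqref{MILTIT1} for some $\beta<1/n$, a Young-type inequality converts it into an improved Hardy inequality with remainder weight $E_2^{-(n\beta+\theta)}$ for some $\theta\in(1,2)$ with $n\beta+\theta<2$ (the constraint $\theta>1$ is what makes the auxiliary integral $P_{B_1}$ finite, and it is compatible with $n\beta+\theta<2$ precisely when $\beta<1/n$), contradicting the optimality statement $\gamma\ge 2$ of Theorem \ref{SE CrHI1}.
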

\begin{rem}The situation is not clear when $\beta$ satisfies $\frac{1}{n}\leq \beta <\frac{2}{n}$. However, for $n=2$, when $\beta$ satisfies $\frac{1}{n}\leq \beta <\frac{2}{n}$, \eqref{MILTIT1} is true if we consider $\Om $ to the unit ball in $\rn$ and $u \in W $ defined above. This fact follows from \eqref{MILTITP} .
\end{rem}

\section{Preliminaries}
In this section we are going to recall some lemmas, propositions and theorems that we have used in the subsequent sections. But before that let us fix the notations for the rest of this article. 
\par $\Om$ is a bounded domain in $\rn$ and $\RO=\displaystyle{\sup_{x\in \Om}|x|}.$  We define $E_{1}(t) := \log(\frac{e}{t})$ , $E_{2}(t):= \log(eE_{1}(t))$ for $t\in (0,1]$ and $C_{1}(n):= 2^{n-1}-1.$
\par The first lemma is a very basic one. It is basically a representation formula for smooth functions in terms of its derivative. The proof of this lemma can be found in \cite{GT}  (Lemma 7.14).
\begin{lem}\label{Grep1}
Let $\Om$ be any open set in $\rn,$ ($n\geq 2$) and $u\in C_{c}^{1}(\Om)$. Then ,
\begin{align*}
u(x) = \frac{1}{nw_{n}}\int_{\Om}\frac{(x-y).\nabla u(y)}{|x-y|^{n}} dy,
\end{align*}
where, $w_{n}$ is the volume of unit ball in $\rn$. 
\end{lem}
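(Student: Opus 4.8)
The plan is to reduce the identity to the one–dimensional fundamental theorem of calculus along rays emanating from the point $x$, and then to average over all directions. Since $u\in C^1_c(\Om)$ already has compact support strictly inside $\Om$, extending it by zero produces a function in $C^1_c(\rn)$; so I may freely work on all of $\rn$ and recover the stated integral over $\Om$ at the end, the integrand vanishing wherever $\nabla u$ does.

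First I would fix $x\in\Om$ and, for each unit vector $\omega\in S^{n-1}$, write
\[
u(x)=-\int_0^{\infty}\frac{d}{dr}\,u(x+r\omega)\,dr=-\int_0^{\infty}\nabla u(x+r\omega)\cdot\omega\,dr,
\]
which is legitimate because $r\mapsto u(x+r\omega)$ is $C^1$ and has compact support in $[0,\infty)$. Integrating this identity over $\omega\in S^{n-1}$ and using $|S^{n-1}|=n w_n$ (with $w_n$ the volume of the unit ball in $\rn$) gives
\[
n w_n\,u(x)=-\int_{S^{n-1}}\int_0^{\infty}\nabla u(x+r\omega)\cdot\omega\,dr\,d\omega.
\]
Next I would pass to the variable $y=x+r\omega$, i.e.\ $r=|y-x|$ and $\omega=(y-x)/|y-x|$, whose Jacobian is $dy=r^{n-1}\,dr\,d\omega$, hence $dr\,d\omega=|y-x|^{-(n-1)}\,dy$. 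Substituting turns the right–hand side into
\[
-\int_{\rn}\nabla u(y)\cdot\frac{y-x}{|y-x|}\,\frac{dy}{|y-x|^{n-1}}=\int_{\rn}\frac{(x-y)\cdot\nabla u(y)}{|x-y|^{n}}\,dy,
\]
and dividing by $nw_n$ yields the claim.

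The only point needing a word of care is the integrability near $y=x$: since $\nabla u$ is bounded, the integrand is $O(|x-y|^{-(n-1)})$, which is locally integrable in $\rn$, so both the application of Fubini's theorem to the iterated integral above and the change of variables are justified (alternatively one may excise a ball $B_\varepsilon(x)$, perform the computation there, and let $\varepsilon\to0$, the contribution of $B_\varepsilon(x)$ being $O(\varepsilon)$). Finally, because $u$—and hence $\nabla u$—is supported in $\Om$, the integral over $\rn$ collapses to an integral over $\Om$, giving the asserted formula. There is no real obstacle here: the argument is entirely classical, it is precisely Lemma 7.14 of \cite{GT}, and the sole subtlety is the bookkeeping of the mild singularity of the kernel at $y=x$.
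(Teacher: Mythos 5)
Your argument is correct and is precisely the classical proof of the representation formula that the paper simply cites as Lemma 7.14 of \cite{GT}: the fundamental theorem of calculus along rays, averaging over $S^{n-1}$ with $|S^{n-1}|=nw_n$, and the polar-coordinate change of variables, with the mild singularity $O(|x-y|^{-(n-1)})$ correctly noted as locally integrable. No issues.
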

 The proof of following result can be found in \cite{PS} (Proposition 2.6). Easy to see here that we will get an equality instead of the inequality for $n=2$. 
\begin{prop} \label{CHTI1}
Suppose, $u\in \CO,$ and $u\geq 0$.Then$\   \ \Jnv \leq C_1(n)\Inu$. Here $C_1(n)= 2^{n-1}-1$, $\ v(x) = E_{1}^{-\frac{n-1}{n}}(\frac{|x|}{R})u(x)\  \forall x \in \Om,$ $J_{n}[v,\Om,R] = \int_{\Om} |\nabla v(x)|^{n}E_{1}^{n-1}(\frac{|x|}{R})dx$ and $R \geq \RO.$ 
\end{prop}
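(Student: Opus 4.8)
The plan is to perform the ``virtual ground state'' substitution $u=\phi v$ with $\phi(x):=E_1^{\frac{n-1}{n}}(\tfrac{|x|}{R})$, rewrite $\Inu$ as a single difference of two integrals in $v$, estimate that difference pointwise by one elementary convexity inequality, and finally observe that the first-order cross term left over integrates to zero. On the compact support of $u$ one has $|x|/R\le1$, hence $E_1(\tfrac{|x|}{R})\ge1$, so $\phi$ is smooth and bounded below there and $v=\phi^{-1}u\in\CO$, with $\nabla u=\phi\nabla v+v\nabla\phi$. Since $\nabla\phi=-\tfrac{n-1}{n}E_1^{-\frac1n}(\tfrac{|x|}{R})\,\tfrac{x}{|x|^2}$, we get $|\nabla\phi|^n=\big(\tfrac{n-1}{n}\big)^n\tfrac{1}{|x|^nE_1(\tfrac{|x|}{R})}$ and therefore $v^n|\nabla\phi|^n=\big(\tfrac{n-1}{n}\big)^n\tfrac{u^n}{|x|^nE_1^n(\tfrac{|x|}{R})}$, so
\[
\Inu=\int_\Om|\nabla u|^n\,dx-\int_\Om|v\nabla\phi|^n\,dx=\int_\Om|\phi\nabla v+v\nabla\phi|^n\,dx-\int_\Om|v\nabla\phi|^n\,dx .
\]

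Next I would apply the elementary pointwise inequality, valid for all $a,b\in\rn$ and all $n\ge2$,
\[
|a+b|^n\ \ge\ |a|^n+n|a|^{n-2}\langle a,b\rangle+\frac{1}{2^{n-1}-1}\,|b|^n ,
\]
which for $n=2$ is simply the identity $|a+b|^2=|a|^2+2\langle a,b\rangle+|b|^2$. Using it with $a=v\nabla\phi$, $b=\phi\nabla v$ and integrating, the formula above gives
\[
\Inu\ \ge\ n\int_\Om|v\nabla\phi|^{n-2}\langle v\nabla\phi,\phi\nabla v\rangle\,dx+\frac{1}{2^{n-1}-1}\int_\Om\phi^n|\nabla v|^n\,dx ,
\]
and since $\phi^n=E_1^{n-1}(\tfrac{|x|}{R})$ the last integral is exactly $\Jnv$.

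It then remains to show the cross term vanishes. Because $v\ge0$, its integrand equals $n\,v^{n-1}\phi\,|\nabla\phi|^{n-2}\langle\nabla\phi,\nabla v\rangle$; substituting the expressions for $\phi$ and $\nabla\phi$, all powers of $E_1(\tfrac{|x|}{R})$ cancel identically and one is left with
\[
n\,v^{n-1}\phi\,|\nabla\phi|^{n-2}\langle\nabla\phi,\nabla v\rangle=-\Big(\tfrac{n-1}{n}\Big)^{n-1}\frac{x\cdot\nabla(v^n)}{|x|^n} .
\]
Since $v^n\in\CO$ is compactly supported in $\Om\setminus\{0\}$ and $\D\big(\tfrac{x}{|x|^n}\big)=0$ for $x\ne0$, integration by parts yields $\int_\Om\tfrac{x\cdot\nabla(v^n)}{|x|^n}\,dx=-\int_\Om v^n\,\D\big(\tfrac{x}{|x|^n}\big)\,dx=0$. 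Combining with the previous step gives $\Jnv\le(2^{n-1}-1)\,\Inu=C_1(n)\,\Inu$, and when $n=2$ every step is an equality and $C_1(2)=1$, so $\Jnv=\Inu$. The two genuinely load-bearing ingredients here are the sharp constant $2^{n-1}-1$ in the convexity inequality (so that it collapses to an identity at $n=2$) and the ``conformal'' identity $\D(x/|x|^n)=0$, which is precisely what kills the first-order cross term; the rest is bookkeeping. The one point requiring care is the algebraic cancellation of the $E_1$-powers in the cross term — it is the reason the logarithmic Hardy weight $|x|^{-n}E_1^{-n}(\tfrac{|x|}{R})$ is the correct one, and I would verify it explicitly.
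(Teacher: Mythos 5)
Your argument is correct and is essentially the proof of Proposition 2.6 of \cite{PS}, which the paper invokes without reproducing: the substitution $u=\phi v$ with $\phi=E_1^{\frac{n-1}{n}}(\frac{|x|}{R})$, the convexity inequality $|a+b|^n\ge |a|^n+n|a|^{n-2}\langle a,b\rangle+\frac{|b|^n}{2^{n-1}-1}$ (whence the constant $C_1(n)=2^{n-1}-1$ and the equality at $n=2$), and the vanishing of the cross term via $\D\bigl(x/|x|^n\bigr)=0$ together with the exact cancellation of the $E_1$-powers are precisely the ingredients used there. The one point you should support with a reference or a short proof is the convexity inequality itself, which is stated without justification; it is Lindqvist's Lemma 4.2 (P.~Lindqvist, \emph{On the equation $\mathrm{div}(|\nabla u|^{p-2}\nabla u)+\lambda|u|^{p-2}u=0$}, Proc. Amer. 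Math. Soc. 109 (1990)), valid for all $a,b\in\rn$ and $n\ge 2$.
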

The following theorem is due to Barbatis, Filippas and Tertikas(See \cite{BFT1} Theorem B and Proposition 3.2 ). We are going to state a much simplified version of their result, which is about an improved version of $L^{n}$ Hardy's inequality.
\begin{thm}\label{SE CrHI1}
Let $\Om$ be a bounded domain in $\rn$. Then for any $R\geq \RO$  and all  $u \in W^{1,n}_0(\Om)$ there holds

\begin{align}\label{SE CrHIT}
\int_{\Om} |\nabla  u(x)|^{n} dx- \left(\frac{n-1}{n} \right)^{n} \int_{\Om} \frac{|u|^{n}}{|x|^{n}E_{1}^{n}(\frac{|x|}{R})}dx \geq  \frac{1}{2} \left(\frac{n-1}{n} \right)^{n-1} \int_{\Om} \frac{|u|^{n}}{|x|^{n}E_{1}^{n}(\frac{|x|}{R})E_{2}^{2}(\frac{|x|}{R})}dx .
\end{align}
Moreover, if $\Om$ contains the origin and for $R\geq \RO$ there exists a positive constant $B>0$ for which the following holds true 
\begin{align*}
\int_{\Om} |\nabla  u(x)|^{n} dx- \left(\frac{n-1}{n} \right)^{n} \int_{\Om} \frac{|u|^{n}}{|x|^{n}E_{1}^{n}(\frac{|x|}{R})}dx \geq  B \int_{\Om} \frac{|u|^{n}}{|x|^{n}E_{1}^{n}(\frac{|x|}{R})E_{2}^{\gamma}(\frac{|x|}{R})}dx,\ \forall u \in W^{1,n}_0(\Om)
\end{align*}
 and for some $\gamma \in \mathbb{R}$, then 
\begin{itemize}
\item $\gamma \geq 2$ ;
\item and if $\gamma = 2$ then $B \leq \frac{1}{2} \left(\frac{n-1}{n} \right)^{n-1}$.
\end{itemize}
This makes $\frac{1}{2} \left(\frac{n-1}{n} \right)^{n-1}$  the best constant of inequality \eqref{SE CrHIT}.
\end{thm}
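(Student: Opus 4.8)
The plan is to prove the two assertions of Theorem \ref{MILTIT} separately: the upper bound \eqref{MILTIT1} in the range $\beta\ge 2/n$, and the failure of \eqref{MILTIT2} when $\beta<1/n$. For the upper bound, by density of $\CO$ in $\{u\in\Wnz:\Inu<\infty\}$, by Fatou, and by the elementary splitting of $\Om$ into $\{|u|\le\lambda\}$ and $\{|u|>\lambda\}$, it suffices to treat $0\le u\in\CO$ with $\Inu<1$, and this reduces to the family of weighted Moser-type estimates
\begin{equation}\label{pp-star}
\int_\Om\frac{u^{p_k}}{E_2^{\beta p_k}(|x|/R)}\,dx\ \le\ C_n^{\,p_k}\,p_k^{\,p_k(n-1)/n}\,\mathrm{vol}(\Om),\qquad p_k:=\frac{kn}{n-1},\quad k\in\N,
\end{equation}
with $C_n$ depending only on $n$: expanding the exponential and summing $\sum_k\frac{c^k}{k!}$ of \eqref{pp-star}, Stirling's formula gives a bound of the form $B_n\,\mathrm{vol}(\Om)$ precisely when $0<c<A_n$, with $A_n$ read off from $C_n$, exactly as in the classical proof behind \eqref{ILTII2}. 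To prove \eqref{pp-star} I would set $v:=E_1^{-(n-1)/n}(|x|/R)\,u$ and invoke Proposition \ref{CHTI1} to get $\int_\Om|\nabla v|^nE_1^{n-1}(|x|/R)\,dx=\Jnv\le C_1(n)\Inu<C_1(n)$, together with Theorem \ref{SE CrHI1} (applied with $\Inu<1$) to get the extra a priori bound $\int_\Om\frac{|v|^n}{|x|^nE_1(|x|/R)E_2^2(|x|/R)}\,dx<2\big(\tfrac{n}{n-1}\big)^{n-1}$.

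Next I would use Lemma \ref{Grep1} in the form $v(x)\le\frac{1}{nw_n}\int_\Om\frac{|\nabla v(y)|}{|x-y|^{n-1}}\,dy$, so that $\frac{u(x)}{E_2^\beta(|x|/R)}\le\frac{1}{nw_n}\frac{E_1^{(n-1)/n}(|x|/R)}{E_2^\beta(|x|/R)}\int_\Om\frac{|\nabla v(y)|}{|x-y|^{n-1}}\,dy$, and estimate \eqref{pp-star} separately on $\Om\setminus B_\delta$ and $\Om\cap B_\delta$ for a fixed small $\delta=\delta(n)$. On $\Om\setminus B_\delta$ the weights $E_1,E_2$ are comparable to constants, $\|\nabla v\|_{L^n(\Om\setminus B_\delta)}$ is controlled, $u\asymp v$, and the classical Trudinger--Moser argument (O'Neil's lemma for the Riesz potential $I_1$) yields \eqref{pp-star}. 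On $\Om\cap B_\delta$ one keeps the logarithmic weights: decomposing the potential over the dyadic annuli $\{2^{-j-1}|x|\le|x-y|\le2^{-j}|x|\}$ together with $\{|x-y|\ge|x|\}$ and applying H\"older against the weight $E_1^{n-1}$, one turns the two a priori bounds on $v$ into a pointwise bound near the origin, which is then fed back into an Adams-type exponential estimate for the bounded-weight part of the potential. This near-origin estimate is where I expect the real difficulty, and where the threshold $\beta=2/n$ is forced: the gradient bound $\int_\Om|\nabla v|^nE_1^{n-1}\le C_1(n)$ \emph{alone} only gives $|v(x)|\lesssim E_2^{(n-1)/n}(|x|/R)$ near $0$, hence, after multiplying by $E_1^{(n-1)/n}$ and raising to the power $n/(n-1)$, only the exponent $\beta\ge(n-1)/n$; it is the additional $E_2^2$-weighted bound from \eqref{SE CrHIT} that forces $v$ to be much smaller, and the strength of this improvement — governed by the exponent $2$ of $E_2$ — is exactly what brings the threshold down to $\beta=2/n$. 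The technical heart is to make this quantitative \emph{uniformly in $p_k$}, balancing the two weighted integrals of $v$ against each other in a multiscale way as in Moser's original computation, with constants depending only on $n$.

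For the failure when $\beta<1/n$ I would exhibit an explicit radial family. For radial $u$, writing $u(x)=e^{s(n-1)/n}v(s)$ with $s:=\log E_1(|x|/R)\in[0,\infty)$ — so $E_1(|x|/R)=e^s$, $E_2(|x|/R)=1+s$, and, by the substitution $\frac{dr}{r}=-E_1(|x|/R)\,ds$ —
\begin{equation}\label{pp-id}
I_n[u,B_R,R]=w_{n-1}\!\int_0^\infty\!\Big(\big|\tfrac{n-1}{n}v+\dot v\big|^n-\big(\tfrac{n-1}{n}\big)^n|v|^n\Big)ds,\qquad dx= w_{n-1}R^ne^ne^{-ne^s}e^s\,ds,
\end{equation}
and the convexity inequality $|\tfrac{n-1}{n}v+\dot v|^n\le(\tfrac{n-1}{n})^n|v|^n+n(\tfrac{n-1}{n})^{n-1}|v|^{n-1}\mathrm{sgn}(v)\dot v+C_n(|v|+|\dot v|)^{n-2}|\dot v|^2$ together with $\int_0^\infty\frac{d}{ds}|v|^n\,ds=0$ for compactly supported $v$ give $I_n[u,B_R,R]\le C_nw_{n-1}\int_0^\infty(|v|+|\dot v|)^{n-2}|\dot v|^2\,ds$. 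I would take $w_k$ to be the standard one-dimensional Moser profile (rising linearly to height $(Mk)^{1/2}$ on $[0,k]$, constant on $[k,2k]$, decreasing linearly to $0$ on $[2k,3k]$), put $v_k:=w_k^{2/n}$ with linear modifications near the two endpoints ensuring $u_k\in\Wnz$, and translate so that $u_k$ is supported in a fixed ball $B_{\rho_0}\subset\Om$; then $\int_0^\infty(|v_k|+|\dot v_k|)^{n-2}|\dot v_k|^2\,ds=O(M)$, so $I_n[u_k,\Om,R]$ is bounded uniformly in $k$ and, after multiplication by an absolute constant, is $\le 1$. On $[k,2k]$ one has $u_k(s)\asymp e^{s(n-1)/n}(Mk)^{1/n}$, hence $\big(u_k/E_2^\beta(|x|/R)\big)^{n/(n-1)}\asymp e^s(Mk)^{1/(n-1)}(1+s)^{-\beta n/(n-1)}\asymp e^s k^{(1-\beta n)/(n-1)}$ there, and by \eqref{pp-id}
\[
\int_\Om e^{c(u_k/E_2^\beta(|x|/R))^{n/(n-1)}}dx\ \gtrsim\ \int_k^{2k}e^s\exp\!\big(e^s\big(c'k^{(1-\beta n)/(n-1)}-n\big)\big)\,ds .
\]
When $\beta<1/n$ the exponent $(1-\beta n)/(n-1)$ is strictly positive, so for every fixed $c>0$ and all $k$ large the bracket $c'k^{(1-\beta n)/(n-1)}-n$ is positive throughout $[k,2k]$ and the integrand already exceeds $\exp(\exp(k))$; letting $k\to\infty$ forces the supremum in \eqref{MILTIT2} to be $+\infty$. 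The only points requiring care are the verification that $I_n[u_k,\Om,R]=O(1)$ while $\|\nabla u_k\|_{L^n}<\infty$ (which is what dictates the linear modifications of $v_k$ near the endpoints) and the observation that, since $0$ is interior to $\Om$, the ball $B_{\rho_0}$ may be chosen independently of $k$.
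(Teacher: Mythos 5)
Your proposal does not address the statement you were asked to prove. The statement is Theorem \ref{SE CrHI1}, the improved $L^n$ Hardy inequality of Barbatis, Filippas and Tertikas: inequality \eqref{SE CrHIT}, whose remainder term carries the weight $E_2^{-2}$, together with the two optimality assertions (that any such remainder forces $\gamma\geq 2$, and that for $\gamma=2$ the constant $\frac{1}{2}\left(\frac{n-1}{n}\right)^{n-1}$ cannot be improved). What you have written is instead a proof sketch of Theorem \ref{MILTIT}, the paper's main Leray--Trudinger inequality. Nowhere do you establish \eqref{SE CrHIT} or its sharpness; on the contrary, you invoke Theorem \ref{SE CrHI1} as a known ingredient in both halves of your argument --- once to obtain the a priori bound $\int_\Om\frac{|v|^n}{|x|^nE_1(|x|/R)E_2^2(|x|/R)}\,dx<2\big(\tfrac{n}{n-1}\big)^{n-1}$ in the positive direction, and once, implicitly, when you assert that an inequality with weight $E_2^{-(n\beta+\theta)}$ ``can only happen if $n\beta+\theta\geq 2$,'' which is precisely the optimality clause of Theorem \ref{SE CrHI1}. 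Read as a proof of that theorem, your argument is therefore circular, and in substance simply absent.

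Note also that the paper itself supplies no proof of Theorem \ref{SE CrHI1}: it is quoted as a simplified restatement of Theorem B and Proposition 3.2 of the cited work of Barbatis, Filippas and Tertikas, where the inequality comes from a series-expansion analysis of the weighted $L^p$ Hardy operator and the sharpness from test functions concentrating at the origin built from $|x|^{-(n-1)/n}E_1^{(n-1)/n}(|x|/R)$ modulated by powers of $E_2$. If you are to prove this statement, you need an argument of that kind; none of the machinery in your write-up --- the representation formula of Lemma \ref{Grep1}, the weighted $L^q$ estimates, or the Moser-type extremal family --- is aimed at \eqref{SE CrHIT}.
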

For the rest of this article we will denote $\int_{\Om} |\nabla  u(x)|^{n} dx- \left(\frac{n-1}{n} \right)^{n} \int_{\Om} \frac{|u|^{n}}{|x|^{n}E_{1}^{n}(\frac{|x|}{R})}dx$ as $\Inu$.

\vspace{5mm}
\section{Estimate of $L^{q}$ norm}
The proposition that we are going to prove in this section will help us to prove the first part of our main theorem. Our plan is to follow Trudinger's approach of proving Trudinger inequality. To execute this, we need an expedient upper bound of $L^{q}$ norm of $u/ E_2^{2/n}$. In the next proposition we will derive such an upper bound. 

\begin{prop}\label{ILqes}
Let $u \in \Wnz,$ then for any $R \geq \RO$  and $q>n,$ we have the following estimate,
\begin{align}\label{ILqes1}
\left(\int_{\Om}\left|\frac{u(x)}{E_{2}^{\frac{2}{n}}(\frac{|x|}{R})}\right|^{q} dx\right)^{1/q}\leq C_{n} 
 \left[1+\frac{q(n-1)}{n}\right]^{1-\frac{1}{n}+\frac{1}{q}}
\left(vol(\Omega)\right)^{\frac{1}{q}}\left(\Inu\right)^{1/n}.
\end{align}
  Where 
$C_{n} = \frac{1}{n w_{n}^{\frac{1}{n}}} \left[\left(C_{1}(n)\right)^{\frac{1}{n}}+2^{\frac{1}{n}}\left(\frac{n}{n-1}\right)^{\frac{n-1}{n}}\frac{n+1}{n}\right] $
\end{prop}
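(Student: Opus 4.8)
The strategy is to run the classical Trudinger argument via the potential-theoretic representation of Lemma \ref{Grep1}, but applied to the transformed function $v(x) = E_1^{-\frac{n-1}{n}}(\frac{|x|}{R})u(x)$ rather than to $u$ itself, so that Proposition \ref{CHTI1} converts the $L^n$ bound on $\nabla v$ (weighted by $E_1^{n-1}$) into $C_1(n)\,\Inu$. First I would reduce to $u \in \CO$ with $u \geq 0$ by density and by replacing $u$ with $|u|$, noting that the functional $\Inu$ does not increase under $u \mapsto |u|$ and that the left-hand side only depends on $|u|$; the passage from $\CO$ to $\Wnz$ is harmless since removing the origin changes nothing for $W^{1,n}$ functions when $n\geq 2$. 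Then I would write $\frac{u(x)}{E_2^{2/n}(\frac{|x|}{R})} = \frac{E_1^{(n-1)/n}(\frac{|x|}{R})}{E_2^{2/n}(\frac{|x|}{R})}\, v(x)$ and insert Lemma \ref{Grep1} for $v$, obtaining
\begin{align*}
\frac{u(x)}{E_2^{2/n}(\frac{|x|}{R})} = \frac{E_1^{(n-1)/n}(\frac{|x|}{R})}{n w_n\, E_2^{2/n}(\frac{|x|}{R})}\int_\Om \frac{(x-y)\cdot\nabla v(y)}{|x-y|^n}\,dy.
\end{align*}

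The core estimate is then an application of a weighted Young/convolution inequality. I would split the kernel as $|x-y|^{1-n} = \big(|x-y|^{1-n}E_1^{-(n-1)/n}(\frac{|y|}{R})\big)\cdot E_1^{(n-1)/n}(\frac{|y|}{R})$ so that the second factor pairs with $\nabla v(y)$ to produce the quantity $\big(\int_\Om |\nabla v|^n E_1^{n-1}\,dy\big)^{1/n} = (\Jnv)^{1/n}$ after Hölder in $y$ with exponents $n$ and $\frac{n}{n-1}$, while the remaining kernel $|x-y|^{1-n}E_1^{-(n-1)/n}(\frac{|y|}{R})$ must be shown to lie in $L^{\frac{n}{n-1}}$ in $y$ uniformly, up to the factor $\frac{E_2^{2/n}(\frac{|x|}{R})}{E_1^{(n-1)/n}(\frac{|x|}{R})}$ that sits outside. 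The delicate point is that $|x-y|^{1-n}$ is exactly borderline non-integrable to the power $\frac{n}{n-1}$, so the logarithmic weight $E_1^{-(n-1)/n}$ must be used to gain integrability near $y=0$ and the diameter bound $R\geq\RO$ to control it away from zero; this is where the factor $[1+\frac{q(n-1)}{n}]^{1-1/n+1/q}$ and the need for $q>n$ enter, presumably through an interpolation of the $L^{n/(n-1)}$-type bound of the kernel against an $L^1$ bound over $\Om$ of volume $vol(\Om)$, tracking the $q$-dependence of the constant carefully (a Stirling-type or direct computation of $\int_0^1 E_1^{-q/n}\,\frac{dt}{t}$-style integrals). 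I expect this kernel estimate — getting the precise power of $q$ and the clean constant $C_n$ with its two additive pieces, one from $C_1(n)^{1/n}$ via Proposition \ref{CHTI1} and one of the form $2^{1/n}(\frac{n}{n-1})^{(n-1)/n}\frac{n+1}{n}$ presumably from handling the difference between $E_2^{2/n}$ at $x$ versus at $y$ — to be the main obstacle; everything else is bookkeeping.

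Finally, I would assemble: take $L^q(\Om,dx)$ norms of both sides, use Minkowski's integral inequality to move the $L^q_x$ norm inside the $y$-integral, apply the uniform kernel bound to get a constant times $(\Jnv)^{1/n}$ times a power of $vol(\Om)$, and then invoke Proposition \ref{CHTI1} to replace $(\Jnv)^{1/n}$ by $(C_1(n)\Inu)^{1/n}$, which contributes the $C_1(n)^{1/n}$ to the constant and the factor $(\Inu)^{1/n}$ to the right-hand side. Collecting the $vol(\Om)$ powers should give exactly $vol(\Om)^{1/q}$ after the interpolation between the $L^1$ and $L^{n/(n-1)}$ kernel estimates is optimized, and collecting the numerical factors gives $C_n$ as stated. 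The one subtlety to double-check at the end is that the term coming from differentiating or comparing the logarithmic prefactors $E_1, E_2$ at $x$ and $y$ stays bounded — here one uses the elementary monotonicity inequalities $E_2(t) \leq E_1(t)$ for $t$ small and standard estimates like $E_1(|x|/R) \leq E_1(|x-y|/(2R)) + \log 2$ type bounds, which is exactly what produces the $2^{1/n}$ and the $\frac{n+1}{n}$ in the second summand of $C_n$.
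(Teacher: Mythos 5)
There is a genuine gap at the very first step of your core estimate. You apply Lemma \ref{Grep1} to $v$ alone and leave the prefactor $E_{1}^{\frac{n-1}{n}}(\tfrac{|x|}{R})/E_{2}^{\frac{2}{n}}(\tfrac{|x|}{R})$ outside the $y$-integral, hoping to absorb the blow-up of $E_{1}^{\frac{n-1}{n}}(\tfrac{|x|}{R})$ as $x\to 0$ by inserting $E_{1}^{-\frac{n-1}{n}}(\tfrac{|y|}{R})$ into the kernel. This fails: for $y$ bounded away from the origin one has $E_{1}(\tfrac{|y|}{R})=O(1)$, so that portion of $\int_{\Om}|x-y|^{-(n-1)r}E_{1}^{-\frac{(n-1)r}{n}}(\tfrac{|y|}{R})\,dy$ stays bounded below uniformly as $x\to 0$, while the outside factor $E_{1}^{\frac{n-1}{n}}(\tfrac{|x|}{R})$ diverges; the uniform weighted kernel bound you need is therefore false. (A secondary confusion: the borderline non-integrability of $|x-y|^{(1-n)\cdot\frac{n}{n-1}}=|x-y|^{-n}$ sits at $y=x$, not at $y=0$, and the logarithmic weight does nothing there; in the correct argument it is the strict inequality $r<\tfrac{n}{n-1}$ coming from $q>n$, via the three-exponent H\"older splitting with the unweighted kernel $h_r(x)=\int_{\Om}|x-y|^{-(n-1)r}dy$, that saves the day and produces the factor $[1+\tfrac{q(n-1)}{n}]^{1-\frac1n+\frac1q}$.) The paper avoids your problem entirely by applying the representation formula to the full product $v(y)E_{1}^{\frac{n-1}{n}}(\tfrac{|y|}{R})E_{2}^{-\frac{2}{n}}(\tfrac{|y|}{R})$, so every logarithmic weight is evaluated at the integration variable $y$ and no singular prefactor at $x$ ever appears.

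The price of that correct decomposition is a Leibniz term $M(x)=\int_{\Om}\frac{v(y)}{|y|E_{1}^{1/n}(\frac{|y|}{R})E_{2}^{2/n}(\frac{|y|}{R})|x-y|^{n-1}}dy$, and here lies the second gap in your proposal: after the H\"older/Tonelli bookkeeping this term is controlled by $\big(\int_{\Om}\frac{|u|^{n}}{|y|^{n}E_{1}^{n}(\frac{|y|}{R})E_{2}^{2}(\frac{|y|}{R})}dy\big)^{1/n}$, and the only way to dominate that by $(\Inu)^{1/n}$ is the improved $L^{n}$ Hardy inequality of Theorem \ref{SE CrHI1} (the Barbatis--Filippas--Tertikas series-expansion bound with the $E_{2}^{2}$ gain and constant $\tfrac12(\tfrac{n-1}{n})^{n-1}$). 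This is precisely where the summand $2^{\frac1n}(\tfrac{n}{n-1})^{\frac{n-1}{n}}\tfrac{n+1}{n}$ of $C_n$ comes from ($\tfrac{n+1}{n}$ being the pointwise bound on $|\nabla(E_{1}^{1-\frac1n}E_{2}^{-\frac2n})|$), not from elementary comparisons of $E_1$ or $E_2$ at $x$ versus $y$ as you suggest. Your proposal never invokes Theorem \ref{SE CrHI1}, and without it the argument cannot close; the rest of your skeleton (reduction to nonnegative smooth $u$, the substitution $v=E_{1}^{-\frac{n-1}{n}}u$, the final application of Proposition \ref{CHTI1} to convert $\Jnv$ into $C_1(n)\Inu$) matches the paper.
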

\begin{proof}
 First of all note that, it is enough to prove the proposition for positive valued smooth functions. So, let $u\in \CO$, $u \geq 0$ and define, $v(x) = E_{1}^{-\frac{n-1}{n}}(\frac{|x|}{R})u(x)$, $\forall x\in \Om.$ Then using Lemma \ref{Grep1} and the fact $E_{1}(\frac{|x|}{R}),E_{2}(\frac{|x|}{R}) \geq 1$, $\forall x \in \Om,$ we have for  $x\in \Om,$
\begin{align}\label{ILqes2}
\left|\frac{u(x)}{E_{2}^{2/n}(\frac{|x|}{R})}\right| &=\left| \frac{v(x)E_{1}^{{1-\frac{1}{n}}}(\frac{|x|}{R})}{{\left( E_{2}(\frac{|x|}{R})\right)^{2/n}}}\right|\notag \\
&=\left| \frac{1}{nw_{n}} \int_{\Omega} \frac{(x-y). \nabla \left(v(y)\frac{E_{1}^{1-\frac{1}{n}(\frac{|y|}{R})}}{\left(E_{2}(\frac{|y|}{R})\right)^{2/n}}\right)}{|x-y|^{n}} dy \right| 
\notag \\
&\leq \frac{1}{n w_{n}}\int_{\Omega}\frac{|\nabla v(y)| E_{1}^{\frac{n-1}{n}}(\frac{|y|}{R})}{|x-y|^{n-1}} dy +
\frac{1}{n w_{n}} \int_{\Omega} \frac{v(y)}{|x-y|^{n-1}}\left |\nabla\left( \frac{E_{1}^{1-\frac{1}{n}}(\frac{|y|}{R})}{(E_{2}(\frac{|y|}{R}))^{2/n}}\right)\right| dy
\end{align}
Now,
\begin{align*}
\left| \nabla \left(\frac{E_{1}^{1-\frac{1}{n}}}{(E_{2})^{2/n}}\right)(\frac{|y|}{R})\right| 
&= \left|\frac{-E_{1}^{-\frac{1}{n}}(\frac{|y|}{R})(1-\frac{1}{n})\frac{y}{|y|^{2}} E_{2}^{2/n}(\frac{|y|}{R}) + \frac{y}{|y|^2}\frac{2}{n} E_{2}^{\frac{2}{n} -1}(\frac{|y|}{R}) E_{1}^{-\frac{1}{n}}(\frac{|y|}{R})}{E_{2}^{\frac{4}{n}}(\frac{|y|}{R})}\right|\\
&= \left|-\frac{y}{|y|^2 E_{1}^{\frac{1}{n}}(\frac{|y|}{R})}  \  \frac{(\frac{n-1}{n})E_{2}(\frac{|y|}{R}) - \frac{2}{n}}{ E_{2}^{\frac{2}{n}+1}(\frac{|y|}{R})}\right|\\
&\leq \frac{\frac{n+1}{n}}{|y|E_{1}^{\frac{1}{n}}(\frac{|y|}{R})E_{2}^{\frac{2}{n}}(\frac{|y|}{R})}  \ \ \ \ \  \ (\text{Since,}\   E_{2} \geq 1, \ \text{on} \ \Omega)
\end{align*}
So, \eqref{ILqes2} implies,
\begin{align*}
\left|\frac{u(x)}{E_{2}^{\frac{2}{n}}(\frac{|x|}{R})}\right| &\leq \frac{1}{n w_{n}} \int_{\Omega}\frac{|\nabla v(y)|E_{1}^{1-\frac{1}{n}}(\frac{|y|}{R})}{|x-y|^{n-1}} dy 
\\&\hspace{2.3cm}+\frac{n+1}{n^{2} w_{n}}\int_{\Omega} \frac{v(y)}{|y|E_{1}^{\frac{1}{n}}(\frac{|y|}{R})E_{2}^{\frac{2}{n}}(\frac{|y|}{R})|x-y|^{n-1}} dy . 
\end{align*}
Define,
\begin{align*}
K(x) &:=  \int_{\Omega}\frac{|\nabla v(y)|E_{1}^{1-\frac{1}{n}}(\frac{|y|}{R})}{|x-y|^{n-1}} dy \\
M(x) &:=  \int_{\Omega} \frac{v(y)}{|y|E_{1}^{\frac{1}{n}}(\frac{|y|}{R}) E_{2}^{\frac{2}{n}}(\frac{|y|}{R})|x-y|^{n-1}} dy .
\end{align*}
So, we have,
\begin{align} \label{ILqes3}
\left|\left|\frac{u}{E_{2}^{2/n}}\right|\right|_{\Lq} \leq \frac{1}{n w_{n}}\left[\left|\left|K\right|\right|_{\Lq} + \frac{n+1}{n}\left|\left|M\right|\right|_{\Lq}\right]
\end{align}
Now, let $q>n.$ Define $r$ by $1/n + 1/r = 1+1/q.$ Then clearly, $1<r<\frac{n}{n-1}.$ For $x \in \Omega,$  let us define
\begin{align*}
h_{r}(x) := \int_{\Omega} \frac{1}{|x-y|^{(n-1)r}} dy
\end{align*}
Let $\tilde{R}$ be such that $vol(\Om)= w_{n}\Rt^{n}.$ For $x\in \Om $, let us denote $\Br$ to be the ball of radius $\Rt$ and centre at $x$ in $\rn.$ Then as $vol(\Om) = vol(\Br) ,$ we have $vol\left(\Om \cap \Br^{c}\right) = vol\left(\Om^{c}\cap \Br\right).$ Now,
\begin{align*}
h_{r}(x) &= \int_{\Om \cap \Br^{c}} \frac{1}{|x-y|^{(n-1)r}} dy + \int_{\Om \cap \Br} \frac{1}{|x-y|^{(n-1)r}} dy\\
&\leq \frac{1}{\Rt^{(n-1)r}} vol\left(\Om \cap \Br^{c}\right)+ \int_{\Br \cap \Om}\frac{1}{|x-y|^{(n-1)r}} dy\\
&= \frac{vol\left(\Om^{c}\cap \Br\right)}{\Rt^{(n-1)r}}+\int_{\Br \cap \Om} \frac{1}{|x-y|^{(n-1)r}} dy \\
&= \int_{\Om^{c}\cap\Br}\frac{1}{\Rt^{(n-1)r}} dy + \int_{\Om \cap \Br} \frac{1}{|x-y|^{(n-1)r}} dy\\
&\leq \int_{\Om^{c}\cap\Br}\frac{1}{|x-y|^{(n-1)r}} dy+ \int_{\Om \cap \Br}\frac{1}{|x-y|^{(n-1)r}} dy\\
&= \int_{\Br}\frac{1}{|x-y|^{(n-1)r}} dy \\ 
&= \frac{n w_{n}\Rt^{n-(n-1)r}}{n-(n-1)r}.
\end{align*}
Using $1/n + 1/r = 1 + 1/q $ we get
\begin{align}\label{ILqes4}
\left|\left|h_{r}\right|\right|^{\frac{1}{r}}_{\Li} &\leq w_{n}^{1-\frac{1}{n}+\frac{1}{q}}\left(1+\frac{q(n-1)}{n}\right)^{1-\frac{1}{n}+\frac{1}{q}} \Rt^{\frac{n}{q}} \notag \\
&\leq w_n^{1-\frac{1}{n}} \left(1+\frac{q(n-1)}{n}\right)^{1-\frac{1}{n}+\frac{1}{q}} \left(vol(\Omega)\right)^{\frac{1}{q}} \ \ (\text{Since,}\ vol(\Omega) = w_{n} \Rt^{n}).
\end{align}
Now let us break the integrand of $K(x)$ as
\begin{align*}
\frac{|\nabla v(y)| E_{1}^{1-\frac{1}{n}}(\frac{|y|}{R})}{|x-y|^{n-1}} &= \left(\frac{|\nabla v(y)|^{n}E_{1}^{n-1}(\frac{|y|}{R})}{|x-y|^{(n-1)r}}\right)
^{\frac{1}{q}} \frac{1}{|x-y|^{(n-1)(1-\frac{r}{q})}} \left(|\nabla v(y)|^{n}E_{1}^{n-1}(\frac{|y|}{R})\right)^{\frac{1}{n}-\frac{1}{q}}.
\end{align*}
Note that, $\ \frac{1}{q}+ \frac{n-1}{n}+ \frac{1}{n} - \frac{1}{q} = 1.$ 
So, by applying H{\"o}lder's inequality with exponent $q,$ $\ \frac{n}{n-1}$  and $\  \frac{1}{\frac{1}{n}-\frac{1}{q}}$ we get,
\begin{align*}
K(x) &\leq  \left(\int_{\Omega} \frac{|\nabla v(y)|^{n}E_{1}^{n-1}(\frac{|y|}{R})}{|x-y|^{(n-1)r}}dy\right)^
{\frac{1}{q}} \left(h_{r}(x)\right)^{1-\frac{1}{n}}\left(\int_{\Omega}|\nabla v(y)|^{n}E_{1}^{n-1}(\frac{|y|}{R})dy\right)^{\frac{1}{n}-\frac{1}{q}}.
\end{align*}
Integrating,
\begin{align*}
 ||K||_{\Lq} &\leq \left|\left|h_{r}\right|\right|_{\Li}^{1-\frac{1}{n}} \left(\int_{\Omega} |\nabla v|^{n}E_{1}^{n-1}(\frac{|y|}{R})dy\right)^{\frac{1}{n}-\frac{1}{q}}\left(\int_{\Om}\int_{\Om} \frac{|\nabla v(y)|^{n}E_{1}^{n-1}(\frac{|y|}{R})}{|x-y|^{(n-1)r}} dydx\right)^{1/q}.
\end{align*}
So, by Tonelli's theorem, we have,
\begin{align*}
||K||_{\Lq} &\leq \left|\left|h_{r}\right|\right|_{\Li}^{1-\frac{1}{n}} \left(\int_{\Omega} |\nabla v|^{n}E_{1}^{n-1}(\frac{|y|}{R})dy\right)^{\frac{1}{n}-\frac{1}{q}}\left(\int_{\Om}|\nabla v(y)|^{n}E_{1}^{n-1}(\frac{|y|}{R})h_{r}(y) dy\right)^{1/q}.
\end{align*}
Now we use Proposition \ref{CHTI1} to get,
\begin{align}\label{ILqes5}
||K||_{\Lq} \leq &\leq \left(C_{1}(n)\right)^{\frac{1}{n}} \left|\left| h_{r}\right|\right|_{\Li}^{\frac{1}{r}}\left(\Inu\right)^{\frac{1}{n}} .
\end{align}
Similarly writing integrand of $M(x)$ as
\begin{align*}
\frac{|v(y)|}{|y|E_{1}^{\frac{1}{n}}(\frac{|y|}{R}) E_{2}^{\frac{2}{n}}(\frac{|y|}{R})} \frac{1}{|x-y|^{n-1}} &= 
\left(\frac{|v(y)|^{n}}{|y|^{n}E_{1}(\frac{|y|}{R}) E_{2}^{2}(\frac{|y|}{R})|x-y|^{(n-1)r}}\right)^{\frac{1}{q}}\\&\frac{1}{|x-y|^{(n-1)(1-\frac{r}{q})}} 
\left(\frac{|v(y)|^{n}}{|y|^{n}E_{1}(\frac{|y|}{R}) E^{2}_{1}(\frac{|y|}{R})}\right)^{\frac{1}{n}-\frac{1}{q}}
\end{align*}
and applying H{\"o}lder's inequality with the same exponent as in the case of $K(x)$ and Tonelli's theorem we get,
\begin{align*}
||M||_{\Lq} 
&\leq \left|\left| h_{r} \right |\right|_{\Li}^{\frac{1}{r}} \left(\int_{\Omega}\frac{v^{n}(y)}{|y|^{n}E_{1}(\frac{|y|}{R})E_{2}^{2}(\frac{|y|}{R})} dy\right)^{\frac{1}{n}} \\
&= \left|\left| h_{r} \right |\right|_{\Li}^{\frac{1}{r}} \left(\int_{\Omega}\frac{u^{n}(y)}{|y|^{n}E_{1}^{n}(\frac{|y|}{R})E_{2}^{2}(\frac{|y|}{R})} dy\right)^{\frac{1}{n}}.
\end{align*}
In the last inequality we have put $v(x) = E_{1}^{-\frac{n-1}{n}}(\frac{|x|}{R})u(x).$ Now we use Theorem \ref{SE CrHI1} of previous section to get,
\begin{align*}
\Big(\int_{\Omega}\frac{u^{n}(y)}{|y|^{n}E_{1}^{n}(\frac{|y|}{R})E_{2}^{2}(\frac{|y|}{R})}dy\Big)^{\frac{1}{n}} \leq 2^{\frac{1}{n}}
\big(\frac{n}{n-1}\big)^{\frac{n-1}{n}} \big(\Inu\big)^{\frac{1}{n}}.
\end{align*}
So,
\begin{align} \label{ILqes6}
||M||_{\Lq} \leq \left|\left| h_{r}\right|\right|_{\Li}^{\frac{1}{r}} \ 2^{\frac{1}{n}}
\left(\frac{n}{n-1}\right)^{\frac{n-1}{n}} \left(\Inu\right)^{\frac{1}{n}}.
\end{align}
Using \eqref{ILqes4} ,\eqref{ILqes5} and \eqref{ILqes6} we have from \eqref{ILqes2},
\begin{align*}
\left|\left|\frac{u}{E_{2}^{\frac{2}{n}}}\right|\right|_{\Lq} \leq 
\left[C_{1}^{\frac{1}{n}}(n)+2^{\frac{1}{n}}\left(\frac{n}{n-1}\right)^{\frac{n-1}{n}}\frac{n+1}{n}\right]
\frac{1}{nw_{n}^{\frac{1}{n}}}&\left[1+\frac{q(n-1)}{n}\right]^{1-\frac{1}{n}+\frac{1}{q}} \\ 
&\left(vol(\Omega)\right)^{\frac{1}{q}}\left(\Inu\right)^{\frac{1}{n}}.
\end{align*}
This proves \eqref{ILqes1}.
\end{proof}
\section{Proof of Theorem}
\begin{proof}
Let $\ u \in W^{1,n}_{0}(\Omega)$ satisfying $\ \Inu\leq 1.$ Then applying Proposition \ref{ILqes} 
with $\ q= \frac{nk}{n-1},$ $\ k\in\{n,n+1,.....\}$ we get,
\begin{align*}
\int_{\Omega}\left(\frac{|u(x)|}{E_{2}^{2/n}(\frac{|x|}{R})}\right)^{\frac{nk}{n-1}} dx \leq C_{n}^{\frac{nk}{n-1}} \ vol(\Omega) \ (1+k)^{1+k}.
\end{align*}
Multiplying above inequality by $\ \frac{c^{k}}{k!}$ and adding from $ n $ to$\ m$,($n\leq k \leq m$) we get,
\begin{align*}
\int_{\Omega} \sum_{k=n}^{m}\frac{1}{k!}\left(c\left(\frac{|u(x)|}{E_{2}^{2/n}(\frac{|x|}{R})}\right)^\frac{n}{n-1}\right)^{k} dx \leq 
\sum_{k=n}^{m}\left(c\ C_{n}^{\frac{n}{n-1}}\right)^{k}\ vol(\Omega) \ \frac{(1+k)^{1+k}}{k!}.
\end{align*}
R.H.S converges as $\ m\rightarrow \infty$ if $ c<\frac{1}{e\ C_{n}^{\frac{n}{n-1}}}.$ Also by H{\"o}lder's inequality 
\begin{align*}
S = \int_{\Omega} \sum_{s=0}^{n-1}\left(c \left(\frac{|u(x)|}{ E_{2}^{2/n}(\frac{|x|}{R})}\right)^{\frac{n}{n-1}}\right)^{s} dx,
\end{align*}
is bounded by a constant depending only on $n$. This proves the first part of the result for $\ \beta = 2/n$. For $\beta > 2/n,$ the same $ A_{n}$
 and $B_{n}$ will work since $\ E_{2}(\frac{|x|}{R}) \geq 1$, $\forall x\in \Om.$\\ 
 \par Next we will prove the second part of our result i.e. for $\beta <\frac{1}{n}$ the inequality \eqref{MILTIT1} is false. Let $\Om = B_{1},$ be the unit ball in $\rn$ centered at the origin and $\beta < \frac{1}{n}.$  If possible
 let $c_{1}$ and $c_{2}$ be two positive constants depending only on $n$ such that
\begin{align}\label{MILTI}
  \int_{B_{1}} e^{c_{1}\left(\frac{|u|}{E_{2}^{\beta}}\right)^{\frac{n}{n-1}}} dx < c_{2} \ , \forall u \in W^{1,n}_{0}(B_{1})\ \text{satisfying } \Inb \leq 1,
\end{align}
where $R\geq1$ is a fixed real number. We choose $1<\theta <2$ such that $1<n\beta+\theta<2.$ In the rest of the proof we will concentrate our focus on deriving the following inequality
\begin{align}\label{MILTI1}
 \int_{B_{1}} \frac{|u|^{n}(x)}{|x|^{n}E_{1}^{n}(\frac{|x|}{R})E_{2}^{n\beta+\theta}(\frac{|x|}{R})} dx \leq c \Inb \ , \forall u \in W^{1,n}_{0}(B_{1}),
\end{align}
for some constant $c>0.$ But  Theorem \ref{SE CrHI1} suggests that this can only happen if $n\beta+\theta \geq 2.$ Hence we are through. \\
 \par Let $u\in W^{1,n}_{0}(B_{1})$ be such that $I_{n}[u,B_{1},R] \leq 1. $ Then
 \begin{align}\label{MILTI2}
 \int_{B_{1}} &\frac{|u|^{n}}{|x|^{n}E_{1}^{n}(\frac{|x|}{R})E_{2}^{n\beta+\theta}(\frac{|x|}{R})} dx \notag \\ &= \left(\frac{n-1}{c_{1}}\right)^{n-1}
 \int_{B_{1}}\left[\left(\frac{c_{1}}{n-1}\right)^{n-1}\frac{|u|^{n}}{E_{2}^{n\beta}(\frac{|x|}{R})}\right]\left[\frac{1}{|x|^{n}E_{1}^{n}(\frac{|x|}{R})E_{2}^{\theta}(\frac{|x|}{R})}\right] dx\notag\\
 &\leq \left(\frac{n-1}{c_{1}}\right)^{n-1}2^{n-2}\int_{B_{1}} e^{c_{1}\left(\frac{|u|}{E_{2}^{\beta}(\frac{|x|}{R})}\right)^{\frac{n}{n-1}}} dx + P_{B_{1}},
\end{align}
where 
\begin{align*}
 P_{B_{1}} = 2^{2(n-2)} \int_{B_{1}} \left(1+\frac{1}{|x|^{n}E_{1}^{n}(\frac{|x|}{R})E_{2}^{\theta}(\frac{|x|}{R})}\right)\left(\log\left(1
                                   + \left(\frac{1}{|x|^{n}E_{1}^{n}(\frac{|x|}{R})E_{2}^{\theta}(\frac{|x|}{R})}\right)^{\frac{1}{n-1}}\right)\right)^{n-1} dx.
\end{align*}
To derive the last inequality of \eqref{MILTI2} we have used the following inequality
\begin{align*}
 ab \leq 2^{n-2}\left[e^{(n-1)a^{\frac{1}{n-1}}}+ 2^{n-2}(1+b)\left(\log(1+b^{\frac{1}{n-1}})\right)^{n-1}\right]\ , \forall a,b\geq 0,
\end{align*}
which in turn can be derived from the following version of Young's inequality 
\begin{align*}
 ab \leq e^{a}- a -1+(1+b)\log(1+b)-b \ \text{(See \cite{PS} for details)}.
\end{align*}
Now using \eqref{MILTI} we get from \eqref{MILTI2}
\begin{align}\label{MILTI3}
 \int_{B_{1}} \frac{|u|^{n}}{|x|^{n}E_{1}^(\frac{|x|}{R})E_{2}^{n\beta+\theta}(\frac{|x|}{R})} dx \leq c_{2}\left(\frac{n-1}{c_{1}}\right)^{n-1}2^{n-2}+ P_{B_{1}}.
\end{align}
An easy calculation shows that $P_{B_{1}}$ is finite (See \cite{PS}, Theorem 1.1 for details). Hence for general $u \in W_{0}^{1,n}(B_{1}),$ putting $\tilde u = \frac{u}{(\Inb)^{\frac{1}{n}}}$ in 
\eqref{MILTI3} we get \eqref{MILTI1}. This concludes the theorem.
\end{proof}
\section*{Acknowledgement}
The authors would like to thank Prof. Sandeep and Prof Adimurthi for their helpful suggestions.

\end{document}